\newcommand\thistitle{Elementary Proofs of Ring Commutativity Theorems} \newcommand\thistitleshort{Ring Commutativity Theorems}
\title[\thistitleshort]{\thistitle}
\newcommand\thisauthor{Michael Kinyon and Desmond MacHale}
\newcommand\thisauthorshort{Kinyon and MacHale}
\author[\thisauthorshort]{\thisauthor}
\address[Kinyon]{%
Department of Mathematics, University of Denver, Denver, CO 80122, USA
}
\address[MacHale]{%
School of Mathematical Sciences, University College Cork, Cork, T12 XF62, Ireland
}
\email[Kinyon]{michael.kinyon@du.edu}
\email[MacHale]{d.machale@ucc.ie}
\keywords{rings, commutativity theorems}
\subjclass[2020]{16U80, 03B35}
\theoremstyle{plain}
\newtheorem*{jac}{Jacobson's Theorem}
\newtheorem*{her}{Herstein's Theorem}
\newcommand{\End}[1]{\mathrm{End}(#1)}
\newcommand{\ad}[1]{\mathrm{ad}(#1)}
\newcommand{\setof}[2]{\{#1\,:\,#2\}}
\newcommand\authorbio{
{\noindent\textbf{Michael Kinyon}} is Professor of Mathematics at the University of Denver. His mathematical interests include quasigroup theory and semigroup theory, and he is especially keen on using automated deduction software to prove theorems.
\\
{\textbf{Desmond MacHale}} is Emeritus Professor of Mathematics at University College Cork where he taught for forty years.
His main mathematical interests are in groups and rings, but he has been known to dabble in number theory, Euclidean geometry, trigonometric inequalities, combinatorial geometry, problem posing and solving, and the
humour of mathematics. He has written several biographical books on George Boole.
}
\begin{document}

%%%%% ABSTRACT

\begin{abstract}
Jacobson's commutativity theorem says that a ring is commutative if, for each $x$, $x^n = x$ for some $n > 1$. Herstein's generalization says that the condition can be weakened to $x^n-x$ being central. In both theorems, $n$ may depend on $x$. In this paper, in certain cases where $n$ is a fixed constant, we find equational proofs of each theorem. For the odd exponent cases $n = 2k+1$ of Jacobson's theorem, our main tool is a lemma stating that for each $x$, $x^k$ is central. For Herstein's theorem, we consider the cases $n=4$ and $n=8$, obtaining proofs with the assistance of the automated theorem prover \textsc{Prover9}.
\end{abstract}

\maketitle

%%%%% MAIN TEXT

\section{Introduction}
N. Jacobson's celebrated commutativity theorem for rings \cite{NJ1945} and its generalization by I. N. Herstein \cite{Herstein} state:

\begin{jac}
	Let $R$ be a ring such that for each $x\in R$, there exists an integer $n = n(x) > 1$ such that $x^n = x$. Then $R$ is commutative.
\end{jac}

\begin{her}
	Let $R$ be a ring such that for each $x\in R$, there exists an integer $n = n(x) > 1$ such that $[x^n - x,y] = 0$ for all $y\in R$. Then $R$ is commutative.
\end{her}

Jacobson's Theorem is a generalization of Wedderburn's ``Little'' Theorem that every finite division ring is commutative. Rings satisfying the hypothesis of Jacobson's Theorem have been given various names, such as \emph{potent} rings \cite{AndersonDanchev,Oman}, $J$-rings \cite{HuzSiv}, and probably others of which we are unaware. Choosing the first one, Jacobson's Theorem can be stated succinctly as \emph{potent rings are commutative}. Rings satisfying the hypothesis of Herstein's Theorem seem to have never been given a separate name as far as we know, probably because the hypothesis is both necessary and sufficient for a ring to be commutative.

As noted in the statements, both theorems allow the exponent $n$ of the power $x^n$ to depend on $x$. For Jacobson's, there are various proofs in the literature; perhaps the nicest was given independently by J. W. Wamsley \cite{Wamsley} and T. Nagahara and H. Tominaga \cite{NT}. 

In this paper we are interested in both theorems in the case where $n$ is a fixed integer not depending on $x$, that is, rings $R$ for which there exists an integer $n\geq 2$ such that $x^n = x$ for all $x\in R$. There have been various names suggested for such rings: $J$-rings \cite{Luh,Luh2} (also used, as noted, for potent rings \cite{HuzSiv}), Jacobson rings \cite{Dolan}, $n$-rings \cite{MB}, $n$-potent rings \cite{ADT}, and, again, probably others of which we are unaware. Since elements $x$ satisfying $x^n = x$ for some $n$ are often called $n$-potent elements, we choose the name $n$-\emph{potent} rings. We will primarily use the name in the interest of simplifying theorem statements.

The well known class of \emph{Boolean} rings coincides with what we are calling $2$-potent rings. Jacobson's Theorem for such rings is a standard exercise with an easy equational proof (see the first alternative proof to Theorem \ref{Thm:x2=x} below). Indeed, it is a consequence of Birkhoff's Completeness Theorem for Equational Logic \cite{Birkhoff} that for each fixed $n$, an equational proof of Jacobson's Theorem exists. 

Among the papers devoted to proofs of Jacobson's Theorem for particular fixed $n$ \cite{BuckMac,FM,Wavrik,Zhang}, we would particularly like to single out the \emph{tour de force} of Y. Morita \cite{Morita}, who gave human equational proofs for most even numbers $\leq 50$ and all odd numbers $\leq 25$. 

There is certainly similar interest in equational proofs of Herstein's Theorem for fixed $n$, although the literature is not as extensive; see, e.g., \cite{BuckMac2,MacHale}.

It is natural to try to use computer assistance to generate equational proofs of either theorem in the case of fixed $n$ \cite{PZ,Wavrik,Zhang}. For Jacobson's Theorem, M. Brandenburg \cite{MB} has recently done some very exciting work along these lines. 

This paper is in two parts. In {\S}2, we discuss Jacobson's Theorem for certain cases of small $n$. For $n$ odd, our main tool is a useful result we think is new (Lemma \ref{Lem:buckley}): \emph{if $R$ is a $(2k+1)$-potent ring, then for all $x\in R$, $x^k$ is central.} This result was first found by Stephen Buckley (unpublished); our proof differs from his. We illustrate how helpful the lemma is for $n=3,5,$ and $7$. We also give some (we believe) new proofs for various even $n$. All but one of the proofs in {\S}2 were originally human generated; the proof of Lemma \ref{Lem:idem_center} was first found by an automated theorem prover and subsequently humanized.

In {\S}3, we turn to Herstein's Theorem for the specific cases of $n=2$, $4$, and $8$. The proofs were found with the assistance of \textsf{Prover9}, the automated theorem prover developed by W. McCune \cite{McCune}. In fact, the beginning of the authors' collaboration was an email suggestion by the second author to the first that it would be interesting to find automated proofs of Herstein's Theorem for $n=4$ and $8$, and then to see if such proofs could be suitably humanized. 

We would judge the humanization effort to be quite successful for $n=4$ (Theorem \ref{Thm:fourth}) and somewhat successful for $n=8$ (Theorem \ref{Thm:exp8}). For the latter proof, although it is certainly possible for a patient human to follow the individual steps, the overall pattern is difficult to see. We have no idea how or even if a general idea can be extracted from the proof which could be applied to higher powers of $2$.

We conclude this introduction by discussing conventions. Rings are assumed to be associative but not assumed to have a unity, that is, they are what some, following a suggestion of Jacobson (\cite{NJ1989}, pp. 155--156), would call a ``rng''.

The \emph{centre} of a ring $R$ is the subring $Z(R) = \setof{a\in R}{ar=ra,\,\forall r\in R}$. Elements of $Z(R)$ are said to be \emph{central}.

In our proofs, especially in {\S}3, we will make heavy use of the commutator $[x,y] = xy-yx$. This is an interesting binary operation in its own right, but here we mainly use it as a computational tool. It turns out that introducing commutators into \textsf{Prover9} input files and feeding the program basic facts about commutators helps quite a bit in finding proofs. We will discuss the commutator identities we need at the beginning of {\S}3.

\section{Jacobson's Theorem for $n$-potent rings}

Our main interest in this section is giving equational proofs that $n$-potent rings are commutative for various values of $n$. However, for some preliminary results, there is essentially no extra work involved in giving proofs for reduced rings. A ring $R$ is said to be \emph{reduced} if it has no nilpotent elements. This can be equivalently described by the condition
\begin{equation}\label{Eqn:reduced}
	x^2=0\implies x=0\,,
\end{equation}
for all $x\in R$. Every potent ring $R$ is reduced: if $x\in R$ satisfies $x^2 = 0$, let $n = n(x) > 1$ be such that $x^n = x$. Then $x = x^n = x^2 x^{n-1} = 0$. Every reduced ring is a subdirect product of domains \cite{AR,Klein}, but it would breach the spirit of this paper to use this rather deep structural result.

Every reduced ring $R$ satisfies the condition
\begin{equation}\label{Lem:zero_comm}
	xy = 0\implies yx = 0\,,
\end{equation}
for all $x\in R$. Indeed, if $xy=0$, then $(yx)^2 = y(xy)x = 0$ and so $yx = 0$ by \eqref{Eqn:reduced}. Rings satisfying \eqref{Lem:zero_comm} are said to be \emph{reversible} \cite{Cohn}.

Reduced rings are neither defined nor characterized by identities. Thus even elementary proofs in reduced rings unavoidably use \eqref{Eqn:reduced} and hence are not, strictly speaking, equational. However, it is straightforward to convert such proofs to equational ones in $n$-potent rings. For example, to prove directly that an $n$-potent ring ($n>1$) is reversible, note that if $xy=0$, then $yx = (yx)^n = y(xy)^{n-1}x = 0$. 

An idempotent $e$ (that is, an element satisfying $e^2=e$) of a reduced ring $R$ is central. This is well known and has a short, standard proof: check that $(ex - exe)^2=0$ and $(xe - exe)^2=0$, so $R$ being reduced implies $ex - exe = 0$ and $xe - exe = 0$, hence $ex=exe=xe$. In fact, the same expressions occur in a mild generalization.

\begin{lemma}\label{Lem:rev_idem}
	In reversible rings, idempotents are central.
\end{lemma}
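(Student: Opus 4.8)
The plan is to imitate the standard reduced-ring argument recalled just above, but to replace each appeal to the implication $x^2 = 0 \implies x = 0$ by an appeal to the reversibility condition \eqref{Lem:zero_comm}. As in the reduced case, it suffices to prove the two one-sided identities $ex = exe$ and $xe = exe$ for every $x \in R$, since combining them gives $ex = exe = xe$, which is exactly centrality of $e$. The key observation, foreshadowed by the remark that ``the same expressions occur,'' is that the two elements $ex - exe$ and $xe - exe$ from the reduced proof can each be written as a product having $e$ as one factor, whose \emph{reversed} product collapses trivially under the idempotent law.

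First I would write down two products that vanish using nothing more than $e^2 = e$. Namely, $(x - xe)e = xe - xe^2 = 0$ and $e(x - ex) = ex - e^2 x = 0$. Neither of these requires reversibility; they are immediate from $e^2 = e$.

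Then I would flip each product using \eqref{Lem:zero_comm}. Applying reversibility to $(x - xe)e = 0$ yields $e(x - xe) = 0$, i.e. $ex - exe = 0$, so $ex = exe$. Symmetrically, applying reversibility to $e(x - ex) = 0$ yields $(x - ex)e = 0$, i.e. $xe - exe = 0$, so $xe = exe$. Putting the two together gives $ex = exe = xe$, completing the proof. I do not anticipate a genuine obstacle here: the only point requiring a moment of thought is the initial rewriting that exposes $e$ as a factor, after which reversibility does precisely the work that the two ``squares to zero'' computations did in the reduced-ring argument.
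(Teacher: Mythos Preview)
Your proof is correct and follows essentially the same approach as the paper: both start from the trivial identities $e(x-ex)=0$ and $(x-xe)e=0$, apply reversibility to flip each product, and conclude $ex=exe=xe$.
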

\begin{proof}
	If $e$ is an idempotent in a ring $R$, then for all $x\in R$, $e(x-ex) = 0$ and $(x-xe)e = 0$. If $R$ is reversible, then \eqref{Lem:zero_comm} implies $(x-ex)e = xe-exe = 0$ and $e(x-xe) = ex-exe = 0$. Thus $xe=exe=ex$, that is, $e$ is central.
\end{proof}

In the reduced case, the proof of the following useful generalization is only a bit more involved than the classic proof and is based on the same idea.

\begin{lemma}\label{Lem:idem_center}\label{Lem:ez}
	Let $R$ be a reduced ring. If $c\in R$ satisfies $c^2 = tc$ for some integer $t$, then $c$ is central.	
\end{lemma}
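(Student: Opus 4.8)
The plan is to imitate the classical argument that idempotents in a reduced ring are central: exhibit suitable elements whose squares vanish and then invoke the reducedness condition \eqref{Eqn:reduced} to conclude that those elements are zero. The single algebraic fact I will lean on throughout is the hypothesis $c^2 = tc$, which lets me replace $c^2$ (and, via $xc^2 = x(tc) = txc$, also $xc^2$) by a multiple of $c$ whenever a square of $c$ appears.

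First I would establish the ``one-sided'' relation $cxc = tcx$ for every $x \in R$. Following the idempotent proof, set $w = tx - xc$ and observe that $wc = txc - xc^2 = txc - txc = 0$, using $c^2 = tc$. Consequently $(cw)^2 = c(wc)w = 0$, so reducedness \eqref{Eqn:reduced} forces $cw = 0$; since $cw = tcx - cxc$, this is exactly $cxc = tcx$. (A mirror-image computation starting from $(tx - cx)c$ would likewise give $cxc = txc$, but I expect not to need it.)

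The step I anticipate will be the main obstacle is passing from this relation to the centrality of $c$. The naive route --- combining $cxc = tcx$ with the symmetric $cxc = txc$ to obtain $t(cx - xc) = 0$ --- stalls, because there is no elementary way to cancel the integer factor $t$; doing so would seem to require the subdirect-product structure theory we are deliberately avoiding. The resolution is to bypass $t[c,x] = 0$ altogether and instead compute $[c,x]^2$ directly: expanding $(cx - xc)^2 = cxcx - cx^2c - xc^2x + xcxc$ and substituting $cxc = tcx$ (applied to $x$ and to $x^2$) together with $c^2 = tc$ turns every term into a multiple of $t$, and these cancel in pairs to give $[c,x]^2 = 0$. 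Reducedness \eqref{Eqn:reduced} then yields $[c,x] = 0$ for all $x$, i.e.\ $c$ is central. The crux, then, is recognizing that the stray factor $t$ should be absorbed by squaring the commutator rather than removed by cancellation.
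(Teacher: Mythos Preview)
Your proof is correct. Both your argument and the paper's aim for $[c,x]^2 = 0$ and then invoke \eqref{Eqn:reduced}, and both pass through the intermediate identity $cxc = tcx$ (equivalently $c[c,x]=0$): you obtain it from $(tx-xc)c=0$ together with $(cw)^2=c(wc)w$, the paper from $c[c,x]c=0$ together with $(c[c,x])^2=(c[c,x]c)[c,x]$, which is the same maneuver in slightly different packaging. The genuine difference is the endgame. The paper also proves the companion identity $[c,x]c=0$, deduces $xc[c,x]=0$ and $[c,x]cx=0$, and then invokes the reversibility property \eqref{Lem:zero_comm} of reduced rings to convert $xc[c,x]=0$ into $[c,x]xc=0$, whence $[c,x]^2=[c,x]cx-[c,x]xc=0$. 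Your direct expansion of $(cx-xc)^2$, using only the single rule $cyc=tcy$ (applied with $y=x$ and $y=x^2$) and $c^2=tc$, reaches the same conclusion without the reversibility detour, so it is a little more self-contained; the paper's route, by contrast, keeps the integer $t$ out of the final computation and makes the parallel with the classical idempotent argument more transparent.
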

\begin{proof}
	Firstly, for all $x\in R$, $c^2 x c = tc\cdot xc = cx\cdot tc = cxc^2$. Thus $c[c,x]c = c(cx-xc)c = 0$. In particular, $(c[c,x])^2 = 0$ and 
	$([c,x]c)^2 = 0$. Since $R$ is reduced, \eqref{Eqn:reduced} yields both $c[c,x] = 0$ and $[c,x]c = 0$. These imply, respectively,
	\begin{align}
		xc[c,x] &= 0 \qquad \text{and}\label{Eqn:ic_1} \\
		[c,x]cx &= 0\,. \label{Eqn:ic_2}
	\end{align}
	Applying \eqref{Lem:zero_comm} to \eqref{Eqn:ic_1}, we get
	\begin{equation}\label{Eqn:ic_3}
		[c,x]xc = 0\,.
	\end{equation}
	Subtracting \eqref{Eqn:ic_3} from \eqref{Eqn:ic_2}, we obtain $[c,x]^2 = 0$. Since $R$ is reduced, $[c,x]=0$ for all $x\in R$, that is, $c\in Z(R)$.
\end{proof}

\begin{lemma}\label{Lem:red_n-1}\label{Lem:n-1}
	Let $R$ be a reduced ring and let $n>1$ be an integer. If $c\in R$ satisfies $c^n = c$, then $c^{n-1}$ is a central idempotent.
\end{lemma}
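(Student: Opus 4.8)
The statement splits into two independent claims---that $e := c^{n-1}$ is idempotent and that it is central---and my plan is to dispatch each one using the machinery already established in the excerpt.

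First I would verify idempotency by a short power computation. Writing $e = c^{n-1}$, I have $e^2 = c^{2n-2}$, and the hypothesis $c^n = c$ ought to collapse this back down to $c^{n-1}$. Concretely, for $n \geq 3$ one has $c^{2n-2} = c^{n-2}c^n = c^{n-2}c = c^{n-1}$, while for $n = 2$ the element $e = c$ is already idempotent, since $e^2 = c^2 = c = e$ is exactly the hypothesis. (The case split is only there to avoid writing $c^0$ in a ring without unity; more uniformly, $c^{n+k} = c^{k+1}$ for all $k \geq 0$.) Either way, $e^2 = e$.

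For centrality I would simply invoke a result already available above. Since $R$ is reduced, every idempotent is central---this is the standard fact recalled immediately before Lemma \ref{Lem:rev_idem}, and it is also the special case $t = 1$ of Lemma \ref{Lem:idem_center}, as $e^2 = e = 1\cdot e$. (Equivalently, $R$ reduced satisfies \eqref{Lem:zero_comm}, so $R$ is reversible and Lemma \ref{Lem:rev_idem} applies.) Thus $e = c^{n-1}$ lies in $Z(R)$, which is what we wanted.

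I do not anticipate any genuine obstacle here: once Lemma \ref{Lem:idem_center} is in hand, this lemma is essentially a corollary. The only points demanding care are the elementary exponent bookkeeping and the degenerate $n = 2$ case, neither of which is substantive. The real content---converting a near-idempotent relation into centrality by exploiting reducedness---was already extracted in the proof of Lemma \ref{Lem:idem_center}.
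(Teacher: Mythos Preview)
Your proof is correct and mirrors the paper's argument almost exactly: the same $n=2$ versus $n>2$ case split for the idempotency computation, followed by an appeal to Lemma~\ref{Lem:idem_center} (with $t=1$) for centrality. The only cosmetic difference is that you also note the alternative route through Lemma~\ref{Lem:rev_idem}, which the paper does not bother to mention.
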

\begin{proof}
	If $n=2$, then $c^{n-1} = c$, while if $n > 2$, then $c^{n-1}c^{n-1} = c^n c^{n-2} = cc^{n-2} = c^{n-1}$. In either case, $c^{n-1}$ is an idempotent and so Lemma \ref{Lem:idem_center} applies.
\end{proof}

We now specialize from reduced rings to $n$-potent rings, our real interest.
The following result will turn out to be crucial, and can be viewed as improving Lemma \ref{Lem:n-1} in the case of odd $n$.

\begin{lemma}\label{Lem:buckley}
	Let $k\geq 1$ be an integer and let $R$ be a $(2k+1)$-potent ring. Then $x^k\in Z(R)$ for all $x\in R$.
\end{lemma}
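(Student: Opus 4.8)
The plan is to funnel the whole argument into Lemma~\ref{Lem:idem_center}. Since $R$ is potent it is reduced, so that lemma is available. I would write $c = x^k$ and make the key preliminary observation that the odd-exponent identity $x^{2k+1}=x$ collapses high powers: factoring out $x^{2k+1}=x$ shows $x^{m}=x^{m-2k}$ whenever $m\geq 2k+1$. Applying this with $m=3k$ (legitimate since $k\geq 1$) gives
\[
 c^3 = x^{3k} = x^{3k-2k} = x^{k} = c.
\]
Thus, regardless of the size of $k$, the element $c$ satisfies the single, very tractable relation $c^3=c$, which is what makes the rest elementary.

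From $c^3=c$ one gets $c^4 = c^2\cdot c^2 = c^3\cdot c = c^2$, so $c^2$ is idempotent; hence $c^2\in Z(R)$ either by Lemma~\ref{Lem:n-1} (with $n=3$) or by Lemma~\ref{Lem:idem_center} applied to $c^2$ with $t=1$. The heart of the proof is then a single clever choice of auxiliary element: set $s = c^2 + c$. Using $c^4=c^2$ and $c^3=c$, a direct expansion yields
\[
 s^2 = c^4 + 2c^3 + c^2 = 2c^2 + 2c = 2s.
\]
This is exactly the hypothesis of Lemma~\ref{Lem:idem_center} with $t=2$, so $s\in Z(R)$. Since $c^2$ is also central, $c = s - c^2$ is a difference of central elements, whence $x^k = c\in Z(R)$, as required.

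The main obstacle is conceptual rather than computational: one must spot that the odd exponent forces $c^3 = c$ and then guess the element $s=c^2+c$ whose square is an integer multiple of itself. The tempting shortcut is to argue directly that $[c,y]=0$ from the fact that $c^2$ is central, but this is genuinely false for an isolated element of a reduced ring—for instance $c=i$ in the quaternions has central square $c^2=-1$ yet is not central (and, tellingly, $c^3=-i\neq c$ there). It is precisely the potency hypothesis, distilled into $c^3=c$, that excludes such behaviour, and the role of $s$ is to repackage that relation into the form Lemma~\ref{Lem:idem_center} can digest.
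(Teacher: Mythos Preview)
Your proof is correct and is essentially the paper's own argument, just packaged with the abbreviation $c=x^k$: the paper likewise shows $x^{3k}=x^k$, notes that $x^{2k}$ is a central idempotent via Lemma~\ref{Lem:n-1}, computes $(x^{2k}+x^k)^2=2(x^{2k}+x^k)$, and applies Lemma~\ref{Lem:idem_center} with $t=2$ to conclude. Your rephrasing in terms of the single relation $c^3=c$ is a tidy way to see why the auxiliary element $s=c^2+c$ is natural, but the underlying computation and the key lemma invoked are identical.
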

\begin{proof}
	Firstly, $x^{2k}$ is a central idempotent by Lemma \ref{Lem:n-1}. Next we show $x^{3k} = x^k$. The claim is clear if $k=1$, while if $k > 1$, then $x^{3k} = x^{2k+1} x^{k-1} = xx^{k-1} = x^k$. Finally, $(x^{2k} + x^k)^2 = (x^{2k})^2 + 2x^{3k} + x^{2k} = 2(x^{2k} + x^k)$. By Lemma \ref{Lem:ez} (with $c=x^{2k}+x^k$, $t=2$), $x^{2k} + x^k\in Z(R)$ and so
	$x^k = (x^{2k}+x^k) - x^{2k}\in Z(R)$ for all $x\in R$, as claimed.
\end{proof}

For $n$ even, we have the following.

\begin{lemma}\label{Lem:char}
	Let $n>1$ be an even integer and let $R$ be an $n$-potent ring. Then $2x=0$ for all $x\in R$.
\end{lemma}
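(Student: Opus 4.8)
The plan is to apply the defining identity of an $n$-potent ring not to $x$ itself but to its additive inverse $-x$, and then to exploit the parity of $n$. First I would observe that since $R$ is $n$-potent, \emph{every} element of $R$ satisfies the $n$-th power identity $y^n = y$; in particular, evaluating this at the element $y = -x$ yields $(-x)^n = -x$. This is the one genuinely clever step, and everything else is bookkeeping.

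Next I would compute the left-hand side directly. In any ring, the product of $n$ copies of $-x$ equals $(-1)^n x^n$, since each factor contributes a single sign and these signs may be collected using the usual rules $(-a)b = -(ab) = a(-b)$ and $(-a)(-b) = ab$; this needs only associativity and distributivity, so the absence of a unity causes no difficulty. Because $n$ is \emph{even}, $(-1)^n = 1$, and hence $(-x)^n = x^n$. Applying the $n$-potent hypothesis a second time, now to $x$, gives $x^n = x$. Chaining the equalities together produces
\[
	-x = (-x)^n = x^n = x\,,
\]
and adding $x$ to both sides gives $2x = x + x = 0$ for all $x\in R$, as required.

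There is essentially no substantive obstacle here: the entire force of the argument is the single observation that evaluating the hypothesis at $-x$ and using $(-1)^n = 1$ forces $x$ and $-x$ to coincide. The only point requiring a moment's care is the sign bookkeeping in $(-x)^n = (-1)^n x^n$ for a ring that may lack a unity, but this is routine. I would expect the proof to be no longer than a couple of lines once these observations are assembled.
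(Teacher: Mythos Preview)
Your argument is correct and is precisely the paper's own proof: apply the $n$-potent identity to $-x$, use that $(-x)^n = x^n$ since $n$ is even, and conclude $-x = x$, hence $2x = 0$. The paper compresses this to a single line, but the idea is identical.
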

\begin{proof}
	For all $x\in R$, $-x = (-x)^n = x^n = x$. 
\end{proof}

For the remainder of this section, we establish commutativity theorems for $n$-potent rings for various $n$. We start with a classic, recalling that $2$-potent rings are the same as Boolean rings.

\begin{theorem}\label{Thm:x2=x}
	Every $2$-potent ring is commutative.
\end{theorem}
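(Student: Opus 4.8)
The plan is to prove that every $2$-potent ring is commutative, where $x^2 = x$ for all $x \in R$. The most direct route is purely equational and exploits the two standard tricks for Boolean rings. First I would establish that $2x = 0$ for all $x$: by Lemma \ref{Lem:char} with $n=2$ this is immediate, but it also follows directly by expanding $(x+x)^2 = x+x$, which gives $x^2 + x^2 + x^2 + x^2 = x + x$, i.e. $4x = 2x$, hence $2x = 0$. This characteristic-$2$ fact is what makes the main computation collapse.

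The heart of the proof is to expand $(x+y)^2 = x+y$ for arbitrary $x, y \in R$. Using distributivity the left side is $x^2 + xy + yx + y^2 = x + xy + yx + y$, and equating with $x+y$ yields $xy + yx = 0$. The second key step is then to combine this with $2x=0$: since $2(yx) = 0$, we have $yx = -yx$, and therefore $xy + yx = 0$ gives $xy = -yx = yx$. Thus $xy = yx$ for all $x, y$, which is exactly commutativity.

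I would present these two observations in sequence, deriving $2x=0$ first (or simply citing Lemma \ref{Lem:char}) and then running the expansion of $(x+y)^2$. I expect no real obstacle here: the whole argument is a short equational manipulation, which is precisely why the authors call it a standard exercise. The only point requiring a moment's care is the sign bookkeeping in passing from $xy + yx = 0$ to $xy = yx$, where one must invoke $2x=0$ (equivalently $-z = z$ for every element $z$) rather than assume additive inverses behave as in an arbitrary ring. Since the paper works with rngs that need not have a unity, it is worth noting that nothing in this argument uses a multiplicative identity, so the proof applies verbatim to the non-unital setting.
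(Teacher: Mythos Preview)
Your argument is correct and matches exactly what the paper calls the ``classic proof'' (its Alternative proof~1): cite Lemma~\ref{Lem:char} for $2x=0$, expand $(x+y)^2=x+y$ to get $xy+yx=0$, and conclude $xy=-yx=yx$. The paper's \emph{primary} proof is different---it is the one-line observation that Theorem~\ref{Thm:x2=x} is the case $n=2$ of Lemma~\ref{Lem:n-1} (since $x^{n-1}=x$ is then central)---but your route is explicitly included as an alternative, so there is nothing to correct.
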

\begin{proof}
	This is the case $n=2$ of Lemma \ref{Lem:n-1}.
\end{proof}

As an alternative, here is the classic proof.

\begin{proof}[Alternative proof 1]
	Let $R$ be $2$-potent. For all $x,y\in R$, $x + y = (x+y)^2 = x^2 + xy + yx + y^2 = x + xy + yx + y$. Cancelling, we have $xy+yx=0$, and so $xy = -yx = yx$ by Lemma \ref{Lem:char}.
\end{proof}

We also present here the stunning proof of Brandenburg \cite{MB}.

\begin{proof}[Alternative proof 2] For all $x,y$ in a ring $R$, $xy-yx=$
	\[
	=((x + y)^2 - (x + y)) - (x^2 - x) - (y^2 - y) + ((yx)^2 - yx) - ( (-yx)^2 - (-yx))\,.
	\]
	If $R$ is $2$-potent, then the right hand side equals $0$.
\end{proof}

For a plethora of proofs and variations of our next result, see \cite{BuckMac}. 

\begin{theorem}\label{thm:x3=x}
	Every $3$-potent ring is commutative.
\end{theorem}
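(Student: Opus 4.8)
The plan is to obtain commutativity directly from Lemma~\ref{Lem:buckley}, which is by design the key tool for odd exponents. A $3$-potent ring is precisely the case $2k+1 = 3$ of that lemma, so $k = 1$. The lemma then asserts that $x^k = x^1 = x \in Z(R)$ for every $x \in R$. In other words, every element of $R$ is central, which is exactly the statement that $R$ is commutative. Thus the entire argument reduces to recognizing that the exponent $k$ collapses to $1$ in this smallest odd case.

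In a bit more detail, Lemma~\ref{Lem:buckley} proceeds in three moves: $x^{2k}$ is a central idempotent (via Lemmas~\ref{Lem:n-1} and~\ref{Lem:idem_center}), then $x^{3k} = x^k$, and finally the element $x^{2k} + x^k$ satisfies $c^2 = 2c$, so it is central by Lemma~\ref{Lem:ez}, whence $x^k = (x^{2k}+x^k) - x^{2k}$ is central. For $n = 3$ these specialize to: $x^2$ is a central idempotent, $x^3 = x$ (the hypothesis itself), and $x^2 + x$ is central, giving $x = (x^2+x) - x^2 \in Z(R)$.

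There is essentially no obstacle here, since the work has been front-loaded into the lemmas. The only point worth noting is that $n=3$ is the degenerate odd case in which the conclusion ``$x^k$ central'' already \emph{is} the full conclusion ``$x$ central,'' so no further cancellation or combination of central elements is needed. This is in contrast to $n = 5, 7$, where one still has to leverage the centrality of $x^2$ (resp.\ $x^3$) together with the defining relation to squeeze out commutativity.

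If one instead wanted a self-contained equational proof avoiding the lemma, I would start from $x^3 = x$, apply it to $x+x$ to extract a torsion relation (expanding $(2x)^3 = 2x$ yields $6x = 0$), and then expand $(x+y)^3 = x+y$ and related substitutions to isolate the commutator $[x,y]$. Given the machinery already developed, however, invoking Lemma~\ref{Lem:buckley} is by far the cleanest route.
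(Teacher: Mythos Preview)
Your proof is correct and matches the paper's main proof exactly: both simply observe that this is the case $k=1$ of Lemma~\ref{Lem:buckley}. Your expanded commentary on how the lemma specializes is accurate but not needed for the argument.
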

\begin{proof}
	This is the case $k=1$ of Lemma \ref{Lem:buckley}.
\end{proof}

\begin{proof}[Alternative proof]
	Let $R$ be $3$-potent. By Lemma \ref{Lem:n-1}, $x^2$ is central for all $x\in R$, so $2x = 2x^3 = (x^2+x)^2 - x^4 - x^2$ is central. Next,
	\[
	x^2 + x = (x^2 + x)^3 = x^6 + 3x^5 + 3x^4 + x^3 =
	x^2 + 3x + 3x^2 + x = 2\cdot 2(x^2 + x)\in Z(R)\,.
	\]
	Finally, $x = (x^2+x)-x^2\in Z(R)$ for all $x\in R$, that is, $R$ is commutative.
\end{proof}

\begin{lemma}\label{Lem:ab+ba}
	Let $k$ be an integer and let $R$ be a ring in which $k(xy+yx)\in Z(R)$ for all $x,y\in R$. Then $kx^2\in Z(R)$ for all $x\in R$.
\end{lemma}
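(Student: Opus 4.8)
The plan is to exploit the defining property of central elements: a central element has zero commutator with everything in $R$. The naive move is to substitute $y = x$ in the hypothesis, which gives $k(x\cdot x + x\cdot x) = 2kx^2 \in Z(R)$. This is \emph{almost} what we want, but the spurious factor of $2$ blocks the conclusion, and I expect overcoming this doubling to be the only real subtlety in the proof.

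To isolate $kx^2$ with the correct coefficient, I would fix arbitrary $x,z\in R$ and apply the hypothesis not with $y=x$ but with the pair $(x,z)$, so that $c := k(xz + zx)\in Z(R)$. Since $c$ is central, we have $[c,x] = 0$. The key is that commuting this particular central element against $x$ itself produces a multiple of $x^2$ rather than $2x^2$: expanding $[xz+zx,\,x] = (xz+zx)x - x(xz+zx) = xzx + zx^2 - x^2 z - xzx$, the two $xzx$ terms cancel, leaving $zx^2 - x^2 z = -[x^2,z]$.

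Combining these, $0 = [c,x] = k[xz+zx,\,x] = -k[x^2,z] = -[kx^2,z]$, so $[kx^2,z] = 0$. As $x$ and $z$ were arbitrary, $kx^2\in Z(R)$ for all $x\in R$, as claimed. The entire argument reduces to one short commutator computation; the substantive idea is simply to commute $k(xz+zx)$ with $x$ rather than setting $y=x$, which sidesteps the factor-of-$2$ obstruction entirely and requires no assumption about $2$ being cancellable in $R$.
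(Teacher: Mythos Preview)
Your proof is correct and is essentially the same argument as the paper's: both exploit that $k(xz+zx)$ commutes with $x$, and the cancellation of the $xzx$ cross terms yields $kx^2z = kzx^2$. The only difference is cosmetic---you phrase the computation using commutator brackets, whereas the paper expands $x\cdot k(xy+yx) = k(xy+yx)\cdot x$ directly.
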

\begin{proof}
	We have $k x^2\cdot y + k xyx = x\cdot k(xy+yx) = k(xy+yx)\cdot x = kxyx + y\cdot kx^2$ and so the desired result follows from canceling $kxyx$.
\end{proof}

\begin{lemma}\label{Lem:x2+x}
	If $R$ is a ring in which $x^2 + x\in Z(R)$ for all $x\in R$, then $R$ is commutative.
\end{lemma}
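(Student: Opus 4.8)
The plan is to reduce the hypothesis to the setting of Lemma \ref{Lem:ab+ba} by a standard polarization (linearization) argument, after which the conclusion follows in a single line. Writing $f(x) = x^2 + x$, the hypothesis is exactly that $f(x) \in Z(R)$ for every $x \in R$. The key observation is that the centre is closed under addition and subtraction, so I can extract information about the ``cross term'' $xy + yx$ by expanding $f$ on a sum.

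First I would compute, for arbitrary $x, y \in R$,
\[
f(x+y) = (x+y)^2 + (x+y) = (x^2 + x) + (y^2 + y) + (xy + yx) = f(x) + f(y) + (xy + yx)\,.
\]
Rearranging gives $xy + yx = f(x+y) - f(x) - f(y)$, and the right-hand side is a difference of central elements, hence central. Thus $xy + yx \in Z(R)$ for all $x, y \in R$.

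This is precisely the hypothesis of Lemma \ref{Lem:ab+ba} in the case $k = 1$, so I would invoke that lemma to conclude that $x^2 \in Z(R)$ for all $x \in R$. Finally, since both $x^2 + x$ and $x^2$ are now known to be central, their difference $x = (x^2 + x) - x^2$ is central for every $x \in R$, which says exactly that $R$ is commutative.

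I do not expect any genuinely hard step here: once the polarization identity is written down, everything reduces to the already-established Lemma \ref{Lem:ab+ba}. The only point requiring mild care is that the argument uses nothing beyond the additive closure of $Z(R)$ and that prior lemma; in particular it never needs a multiplicative unity, so it remains valid in the unital-free ``rng'' setting adopted throughout the paper.
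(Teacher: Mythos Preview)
Your proof is correct and is essentially identical to the paper's own argument: polarize $f(x)=x^2+x$ to deduce $xy+yx\in Z(R)$, invoke Lemma~\ref{Lem:ab+ba} with $k=1$ to get $x^2\in Z(R)$, and subtract to conclude $x\in Z(R)$. The only cosmetic difference is your explicit naming of $f$.
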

\begin{proof}
	For all $x,y\in R$, we have $(x+y)^2 + x + y\in Z(R)$, that is, $(x^2 + x) + (y^2+y) + xy + yx\in Z(R)$. It follows that $xy+yx\in Z(R)$, and thus $x^2\in Z(R)$ by Lemma \ref{Lem:ab+ba} (with $k=1$). Therefore $x = (x^2 + x) - x^2\in Z(R)$ for all $x\in R$, i.e., $R$ is commutative.
\end{proof}

\begin{lemma}\label{Lem:binom}
	If $n$ is a power of $2$, then each binomial coefficient $\binom{n}{k}$ is even except for $k=0$ and $k=n$.
\end{lemma}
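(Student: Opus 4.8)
The plan is to replace the arithmetic statement about parities of binomial coefficients by the single polynomial congruence $(1+x)^{n} \equiv 1 + x^{n} \pmod 2$, where $n = 2^m$, and then read off the claim by comparing coefficients via the binomial theorem. This converts the assertion that every ``interior'' coefficient is even into one clean identity modulo $2$, which I would establish by induction on $m$.

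First I would dispose of the base case $m = 0$, where $n = 1$ and $(1+x)^1 = 1 + x = 1 + x^1$, so the congruence is an exact equality and there are no interior coefficients to consider. For the inductive step, assuming $(1+x)^{2^m} \equiv 1 + x^{2^m} \pmod 2$, I would square both sides, expanding via $(a+b)^2 = a^2 + 2ab + b^2$ and discarding the middle term modulo $2$:
\[
(1+x)^{2^{m+1}} = \bigl((1+x)^{2^m}\bigr)^2 \equiv \bigl(1 + x^{2^m}\bigr)^2 = 1 + 2x^{2^m} + x^{2^{m+1}} \equiv 1 + x^{2^{m+1}} \pmod 2\,.
\]
Finally I would invoke the binomial theorem, $(1+x)^n = \sum_{k=0}^n \binom{n}{k} x^k$, and compare coefficients with the right-hand side $1 + x^n$ modulo $2$. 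The coefficients of $x^0$ and $x^n$ are $\binom{n}{0} = \binom{n}{n} = 1$, while every coefficient $\binom{n}{k}$ with $0 < k < n$ is forced to be congruent to $0$ modulo $2$, that is, even, which is exactly the assertion.

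I do not expect a serious obstacle, since the induction amounts to the fact that the squaring map is additive modulo $2$; the only point needing a word of care is the legitimacy of equating coefficients of a polynomial identity modulo $2$, which holds because two integers agree modulo $2$ precisely when they have the same parity. An alternative, equally elementary route avoids polynomials altogether: from the identity $k\binom{n}{k} = n\binom{n-1}{k-1}$ one compares the power of $2$ dividing each side, observing that for $0 < k < 2^m$ the exponent of $2$ in $k$ is at most $m-1$, whereas $n = 2^m$ contributes a full $m$, so $\binom{n}{k}$ must absorb at least one factor of $2$. I would present the polynomial version as the primary proof, as it is the shortest and most transparent.
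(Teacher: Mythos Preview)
Your proof is correct, and since the paper's own proof consists of the single sentence ``This is easily established by induction,'' your polynomial-congruence argument is a perfectly acceptable (and more detailed) realization of exactly that induction. There is nothing to compare beyond noting that you have supplied the details the paper omits; your alternative via $k\binom{n}{k} = n\binom{n-1}{k-1}$ would also have sufficed.
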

\begin{proof}
	This is easily established by induction. 
\end{proof}

\begin{theorem}\label{Thm:x4=x}
	Every $4$-potent ring is commutative.
\end{theorem}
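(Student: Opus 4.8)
The plan is to funnel everything into Lemma~\ref{Lem:x2+x}: once I know that $x^2+x\in Z(R)$ for every $x$, commutativity follows immediately. So the real goal is to verify that $x^2+x$ is central for each $x$ in a $4$-potent ring, and the three tool lemmas (\ref{Lem:ab+ba}, \ref{Lem:x2+x}, \ref{Lem:binom}) just before the theorem statement strongly suggest this is the intended endgame.

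The route I would take exploits the fact that $n=4$ is even. By Lemma~\ref{Lem:char}, $2x=0$ for all $x\in R$, and in particular $2x^3=0$. This is exactly what is needed to make the following computation collapse. For any $x\in R$,
\[
(x^2+x)^2 = x^4 + 2x^3 + x^2 = x^2 + x,
\]
using $x^4=x$ and $2x^3=0$. Thus $x^2+x$ is an idempotent for every $x$.

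Now I invoke the structure theory already assembled in the excerpt. Every $4$-potent ring is potent, hence reduced, and every reduced ring is reversible via \eqref{Lem:zero_comm}. By Lemma~\ref{Lem:rev_idem}, idempotents in a reversible ring are central, so $x^2+x\in Z(R)$ for all $x\in R$. Feeding this into Lemma~\ref{Lem:x2+x} yields that $R$ is commutative.

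\emph{Main obstacle.} This argument is short, and the only genuine insight is spotting that $x^2+x$ becomes idempotent once $2x^3$ vanishes; everything else is merely assembling earlier lemmas. Before hitting on that observation I would have anticipated needing heavier machinery: writing $(x+y)^4 = x+y$, expanding, and using Lemma~\ref{Lem:binom} to kill the even binomial coefficients $\binom{4}{1},\binom{4}{2},\binom{4}{3}$ so as to control the mixed terms, then grinding toward $xy+yx\in Z(R)$ and closing with Lemma~\ref{Lem:ab+ba}. That longer computation is the fallback I would keep in reserve, but the idempotence trick makes Lemma~\ref{Lem:binom} unnecessary here, so I suspect that lemma is really aimed at the $n=8$ case treated later.
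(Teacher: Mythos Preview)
Your proof is correct and follows essentially the same route as the paper: use Lemma~\ref{Lem:char} to get characteristic~$2$, compute $(x^2+x)^2=x^2+x$, conclude $x^2+x$ is central because idempotents in a reduced ring are central, and finish with Lemma~\ref{Lem:x2+x}. The only cosmetic differences are that the paper cites Lemma~\ref{Lem:ez} (with $t=1$) rather than Lemma~\ref{Lem:rev_idem} for centrality of idempotents, and it does invoke Lemma~\ref{Lem:binom} (trivially, for $\binom{2}{1}$) where you write $2x^3=0$ directly; your guess that Lemma~\ref{Lem:binom} is aimed elsewhere is right in spirit---its real payoff is in Theorem~\ref{Thm:xgen=x}.
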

\begin{proof}
	Let $R$ be $4$-potent.
	By Lemma \ref{Lem:char}, $2x=0$ for all $x\in R$. Thus $(x^2+x)^2 = x^4 + x^2 = x^2 + x$ using Lemma \ref{Lem:binom}. By Lemma \ref{Lem:ez}, $x^2 + x$ is central for all $x\in R$, and so $R$ is commutative by Lemma \ref{Lem:x2+x}.
\end{proof}

\begin{theorem}\label{Thm:x5=x}
	Every $5$-potent ring is commutative.
\end{theorem}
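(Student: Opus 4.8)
The plan is to mimic the alternative proof of Theorem~\ref{thm:x3=x}, carrying the same idea through the extra integer coefficients that show up for $n=5$. The engine is Lemma~\ref{Lem:buckley} with $k=2$: in a $5$-potent ring every square $x^2$ is central, and hence so are $(x^2+x)^2$ and $x^4=(x^2)^2$. My target is to show that $x^2+x\in Z(R)$ for every $x$, after which Lemma~\ref{Lem:x2+x} closes the argument immediately.

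First I would establish that $2x$ is central for all $x$. Expanding $(x^2+x)^2 = x^4 + 2x^3 + x^2$ and noting that the left-hand side is central (being a square) while $x^4$ and $x^2$ are central, I conclude that $2x^3$ is central. Then, using $x^5=x$, I can write $2x = 2x^5 = x^2\,(2x^3)$ as a product of two central elements, so $2x\in Z(R)$.

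Next I would apply $5$-potency to the element $x^2+x$ itself. Since $(x^2+x)^5 = x^2+x$, expanding by the binomial theorem and reducing every power via $x^5=x$ should collapse the right-hand side to an integer combination of $x,x^2,x^3,x^4$; I expect $x^2+x = 6x + 6x^2 + 10x^3 + 10x^4$. Each term here is central: $6x = 3(2x)$ is a multiple of the central $2x$; $x^2$ and $x^4=(x^2)^2$ are central; and $10x^3 = x^2\cdot(10x)$ is the product of the central $x^2$ with the central $10x = 5\,(2x)$. Hence $x^2+x$ is a sum of central elements, so $x^2+x\in Z(R)$, and Lemma~\ref{Lem:x2+x} yields commutativity.

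The main obstacle is precisely the odd exponent: unlike the $n=4$ case, Lemma~\ref{Lem:char} does not apply, so there is no collapse to characteristic $2$ and the factors of $2$ appearing in the binomial expansion do not simply vanish. Taming them is exactly the purpose of the preliminary step that $2x$ is central; I expect this to be the one genuinely non-obvious move, with everything else reducing to routine bookkeeping built on the centrality of squares.
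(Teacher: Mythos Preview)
Your proposal is correct and follows essentially the same route as the paper: use Lemma~\ref{Lem:buckley} to get $x^2$ central, deduce $2x^3$ and then $2x$ are central, expand $(x^2+x)^5$ and reduce modulo $x^5=x$ to exhibit $x^2+x$ as a sum of central elements, and finish with Lemma~\ref{Lem:x2+x}. The only cosmetic difference is that the paper writes the reduced expansion as $2(5x^4+5x^3+3x^2+3x)$ and invokes centrality of $2\cdot(\text{anything})$ in one stroke, whereas you argue term by term; your expansion $6x+6x^2+10x^3+10x^4$ is the same expression.
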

\begin{proof}
	Let $R$ be $5$-potent. By Lemma \ref{Lem:buckley}, $x^2\in Z(R)$ for all $x\in R$. Thus $(x^2+x)^2 - x^4 - x^2 = 2x^3$ is central for all $x\in R$.
	Next $2x = 2x^5 = 2x^3\cdot x^2\in Z(R)$ for all $x$. Finally,
	\begin{align*}
		x^2 + x &= (x^2 + x)^5 \\
		&= x^{10} + 5x^9 + 10x^8 + 10x^7 + 5x^6 + x^5 \\
		&= x^2 + 5x + 10x^4 + 10x^3 + 5x^2 + x \\
		&= 2(5x^4 + 5x^3 + 3x^2 + 3x)\in Z(R)\,.
	\end{align*}
	By Lemma \ref{Lem:x2+x}, $R$ is commutative.
\end{proof}

We will postpone the case $n=6$ for now.

\begin{theorem}\label{Thm:x7=x}
	Every $7$-potent ring is commutative.
\end{theorem}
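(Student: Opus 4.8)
The plan is to combine Lemma~\ref{Lem:buckley} with a reduction to fixed prime characteristic. First I would apply Lemma~\ref{Lem:buckley} with $k=3$ to get $x^3\in Z(R)$ for every $x$; squaring, $x^6=(x^3)^2$ is a central idempotent satisfying the ``local unit'' identities $x^6 x=x^7=x$ and $x^6 x^2=x^8=x^2$. The governing difficulty is already visible: in contrast to the even cases (where Lemma~\ref{Lem:char} supplies $2x=0$) and to $n=5$ (where Lemma~\ref{Lem:buckley} centralizes the \emph{even} power $x^2$, hence all even powers), for $n=7$ the centralized power $x^3$ is odd, so $x^2,x^4,x^5$ remain uncontrolled and there is no single characteristic to exploit.

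To manufacture a characteristic, I would feed $z^7=z$ to $z=2x$ and $z=3x$: since $z^7$ equals $128x$ and $2187x$ respectively while also equalling $2x$ and $3x$, we obtain $126x=0$ and $2184x=0$, and as $\gcd(126,2184)=42$ a B\'ezout combination yields $42x=0$ for all $x$. Because $42=2\cdot 3\cdot 7$ and $\mathbb{Z}/42\cong\mathbb{Z}/2\times\mathbb{Z}/3\times\mathbb{Z}/7$, the integers $21,28,36$ are orthogonal idempotents modulo $42$ with $21+28+36\equiv 1$, so $R$ splits as a ring direct product $R\cong R_2\times R_3\times R_7$ of its characteristic-$2$, $-3$, and $-7$ parts (concretely $R_p$ is the corresponding image $21R$, $28R$, $36R$). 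Each factor is reduced, being a direct summand of the reduced ring $R$, and inherits $x^7=x$, so it will suffice to prove each $R_p$ commutative.

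The three factors I would dispatch separately, each reducing to a result already in hand. In $R_2$ I expect $(x^4+x)^2=x^8+2x^5+x^2=x^2+x^2=0$, so \eqref{Eqn:reduced} forces $x^4=x$; thus $R_2$ is $4$-potent and commutative by Theorem~\ref{Thm:x4=x}. In $R_3$, since $x^3$ and $x$ commute and $x^9=x^3$, the binomial expansion collapses in characteristic $3$ to $(x^3-x)^3=x^9-x^3=0$, whence $x^3=x$ and $R_3$ is commutative by Theorem~\ref{thm:x3=x}. In $R_7$ I would exploit the centrality of cubes applied to $x+x^6$: reducing powers gives $(x+x^6)^3-x^3-(x^6)^3=3x^2+3x$, so $3(x^2+x)\in Z(R_7)$, and since $3$ is invertible modulo $7$ this yields $x^2+x\in Z(R_7)$, whereupon Lemma~\ref{Lem:x2+x} finishes. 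A product of commutative rings being commutative, $R$ is commutative.

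The step I expect to be the main obstacle is precisely the passage from ``$x^3$ central'' to genuine commutativity in the absence of a uniform characteristic. A direct attack yields only central \emph{multiples}: the computation above gives $3(x^2+x)\in Z(R)$, and linearizing this and invoking Lemma~\ref{Lem:ab+ba} (with $k=3$) gives $3x\in Z(R)$, but the coefficient $3$ is a nonunit on the $3$-torsion, and similar relations from the degree-$7$ expansion only ever produce multiples of $6$, invisible on the $2$- and $3$-torsion. Recognizing that the remedy is to break $R$ along $42x=0$ into its prime-characteristic pieces, where each piece unexpectedly satisfies a \emph{lower} potency law ($x^4=x$ in characteristic $2$ and $x^3=x$ in characteristic $3$), is the crux; the rest is routine bookkeeping with powers.
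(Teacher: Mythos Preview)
Your proof is correct, but it proceeds by a genuinely different strategy from the paper's. After obtaining $x^3\in Z(R)$ via Lemma~\ref{Lem:buckley}, the paper never decomposes $R$: it stays inside $R$ and, through a chain of equational manipulations (starting from $(x^2+x)^3$ and $(x^2+x)^7$), successively shows that $3x$, then $2x^5+2x^4+x^2+x$, then $x^4-x^2$, then $x^4+x^2$, then $2x^2$, and finally $x^2$ are all central, whence $x=x^2\cdot x^2\cdot x^3\in Z(R)$. Your approach instead extracts the additive bound $42x=0$ and uses the orthogonal idempotents $21,28,36\pmod{42}$ to split $R$ into characteristic-$2$, -$3$, and -$7$ factors, each of which you then reduce to a theorem already proved (Theorems~\ref{Thm:x4=x}, \ref{thm:x3=x}, and Lemma~\ref{Lem:x2+x} respectively). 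Your route is conceptually cleaner and explains \emph{why} the obstruction disappears---each prime-characteristic piece is secretly governed by a smaller potency law or has $3$ invertible---while the paper's route is in keeping with its explicit aim of producing a single equational proof within $R$ (in the spirit of Birkhoff completeness). Both end up passing through the observation $3(x^2+x)\in Z(R)$; you invert the $3$ on the char-$7$ piece, whereas the paper must work harder to kill the factor of~$3$ globally.
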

\begin{proof}
	Let $R$ be $7$-potent.
	By Lemma \ref{Lem:buckley}, $x^3\in Z(R)$ for all $x\in R$. Thus $(x^2+x)^3 - x^6 - x^3 = 3(x^5 + x^4)\in Z(R)$. Multiplying by $x^3$, we get $3(x^2 + x)\in Z(R)$. It follows that 
	\[
	3((x+y)^2 + x + y) - 3(x^2 + x) - 3(y^2 + y) = 3(xy + yx) \in Z(R)\,.
	\]
	By Lemma \ref{Lem:ab+ba} (with $k=3$), $3x^2\in Z(R)$. Thus $3x = 3(x^2+x) - 3x^2\in Z(R)$ for all $x\in R$.
	
	Next, we have
	\begin{align*}
		x^2 + x &= (x^2 + x)^7 \\
		&= x^{14} + 7x^{13} + 21x^{12} + 35x^{11} + 35x^{10} + 21x^9 + 7x^8 + x^7 \\
		&= x^2 + 7x + 21x^6 + 35x^5 + 35x^4 + 21x^3 + 7x^2 + x\,.
	\end{align*}
	Cancelling $x^2 + x$ from both sides and separating terms, we get
	\[
	0 = 3 (2 x + 7 x^6 + 11 x^5 + 11 x^4 + 7 x^3 + 2 x^2) + x + 2 x^5 + 2 x^4 + x^2\,.
	\]
	Thus $2 x^5 + 2 x^4 + x^2 + x\in Z(R)$. Replacing $x$ in this last expression with $-x$ and adding gives $4x^4 + 2x^2\in Z(R)$. 
	Thus $x^4 - x^2 = 4x^4 + 2x^2 - 3(x^4 + x^2)\in Z(R)$
	
	Since both $x^3$ and $x^4 - x^2$ are central, so is $(x^4 - x^2)^2 + 2(x^3)^2 = x^8 + x^4 = x^2 + x^4$. Therefore $2x^2 = (x^2 + x^4) - (x^4 - x^2)\in Z(R)$, and so $x^2 = 3x^2 - 2x^2\in Z(R)$. 
	
	Finally, $x = x^7 = x^2\cdot x^2\cdot x^3\in Z(R)$ for all $x\in R$, and we are finished.
\end{proof}

Many other cases of $x^n = x$ for small $n$ can be handled by these methods, the more difficult cases being when $n$ is odd (although Lemma \ref{Lem:buckley} certainly helps) or a power of $2$. We believe our proofs are close to being best possible (in some sense) and would be pleased to hear from any reader who can shorten or improve an any of them.

We conclude this section with one more example, covering the cases $n = 6$, $10$, $18$, $34$, $\ldots$ in a single theorem.

\begin{theorem}\label{Thm:xgen=x}
	Let $j\geq 1$ be an integer and assume that every $2^{j+1}$-potent ring is commutative. Let $i > j$ be an integer and let $R$ be a $2^i + 2^j$-potent ring. Then $R$ is commutative.
\end{theorem}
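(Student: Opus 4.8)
The plan is to reduce $R$ to the hypothesis by proving that $R$ is in fact $2^{j+1}$-potent; once $x^{2^{j+1}} = x$ holds for every $x$, the assumed commutativity of $2^{j+1}$-potent rings finishes everything. Since $n = 2^i + 2^j$ is even, Lemma \ref{Lem:char} gives $2x = 0$, so $R$ has characteristic $2$, and this is the feature that makes the whole argument run.

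The crux is to extract, for each fixed $x$, the single-variable identity $x^{2^i+1} = x^{2^j+1}$. I would obtain it by expanding $(x + x^2)^n$ in two ways. On one hand $(x+x^2)^n = x + x^2$ by $n$-potency. On the other hand, factoring $n = 2^i + 2^j$ and using that $x$ and $x^2$ commute, the additivity of the Frobenius map on commuting elements in characteristic $2$ (Lemma \ref{Lem:binom} together with $2x=0$) gives $(x+x^2)^{2^i} = x^{2^i} + x^{2^{i+1}}$ and likewise for the exponent $2^j$, so that $(x+x^2)^n = (x^{2^i}+x^{2^{i+1}})(x^{2^j}+x^{2^{j+1}})$. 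Every factor here is a power of $x$, so the product expands without any commutativity assumption on $R$, and each resulting exponent collapses using $x^n = x$: one checks $x^{2^i+2^j} = x$, then $x^{2^{i+1}+2^{j+1}} = x^{2n} = x^2$, $x^{2^i + 2^{j+1}} = x^{2^j+1}$, and $x^{2^{i+1}+2^j} = x^{2^i+1}$. Cancelling $x + x^2$ from both sides then leaves exactly $x^{2^j+1} + x^{2^i+1} = 0$, i.e. $x^{2^i+1} = x^{2^j+1}$.

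From there the finish is immediate: multiplying $x^{2^i+1} = x^{2^j+1}$ by $x^{2^j-1}$ (a genuine power since $j \geq 1$) turns the left side into $x^{2^i+2^j} = x^n = x$ and the right side into $x^{2^{j+1}}$, so $x^{2^{j+1}} = x$ for all $x \in R$. Thus $R$ is $2^{j+1}$-potent and is commutative by hypothesis. I expect the main obstacle to be one of discovery rather than of difficulty: one must hit upon testing the particular element $x + x^2$, and one must notice that, because only powers of a single element are ever multiplied together, Frobenius additivity and the binomial argument apply even though $R$ is not yet known to be commutative. The exponent bookkeeping that reduces the four cross terms modulo $x^n = x$ is then entirely routine.
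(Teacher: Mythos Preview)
Your proof is correct and is essentially identical to the paper's own proof: both apply $n$-potency to $x^2+x$, use $2x=0$ together with Lemma~\ref{Lem:binom} to split $(x^2+x)^{2^i+2^j}$ as $(x^{2^{i+1}}+x^{2^i})(x^{2^{j+1}}+x^{2^j})$, reduce the four cross-terms via $x^n=x$ to obtain $x^{2^i+1}=x^{2^j+1}$, and then multiply by $x^{2^j-1}$ to conclude $x^{2^{j+1}}=x$. The exposition and exponent bookkeeping differ only cosmetically.
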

\begin{proof}
	We have $2x=0$ for all $x\in R$ by Lemma \ref{Lem:char}. Now
	\begin{alignat*}{3}
		x^2 + x &= (x^2 + x)^{2^i + 2^j}
		&&= (x^2+x)^{2^i} (x^2 + x)^{2^j} \\
		&= (x^{2^{i+1}} + x^{2^i})(x^{2^{j+1}}+x^{2^j})
		&&= x^{2(2^i + 2^j)} + x^{2^i + 2^{j+1}} + x^{2^{i+1} + 2^j} + x^{2^i + 2^j}\\
		&= x^2 + x^{2^i + 2^j + 2^j} + x^{2^i + 2^i + 2^j} + x
		&& = x^2 + x^{2^j + 1} + x^{2^i + 1} + x\,,
	\end{alignat*}
	using Lemma \ref{Lem:binom} in the third equality. Cancelling, we have
	$x^{2^j + 1} + x^{2^i + 1} = 0$, that is, $x^{2^i + 1} = x^{2^j + 1}$.
	Finally, we have 
	\[
	x = x^{2^i + 2^j} = x^{2^i + 1} x^{2^j - 1} = x^{2^j + 1} x^{2^j - 1}
	= x^{2^{j+1}}
	\]
	for all $x\in R$. By assumption, $R$ is commutative.
\end{proof}

\begin{corollary}\label{Cor:x6or10=x}
	Let $R$ be an $(2^i + 2)$-potent ring where $i > 1$. Then $R$ is commutative.
\end{corollary}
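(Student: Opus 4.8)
The plan is to recognize the corollary as a direct specialization of Theorem \ref{Thm:xgen=x}. Writing $2 = 2^1$, an $(2^i + 2)$-potent ring is precisely a $(2^i + 2^j)$-potent ring in the case $j = 1$, and the corollary's hypothesis $i > 1$ is exactly the theorem's requirement $i > j$. So the entire task reduces to checking that the theorem is applicable with this choice of $j$.

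To apply Theorem \ref{Thm:xgen=x} with $j = 1$, I first need to supply its standing assumption that every $2^{j+1}$-potent ring is commutative. Since $2^{j+1} = 2^2 = 4$, the needed fact is exactly that every $4$-potent ring is commutative, and this is Theorem \ref{Thm:x4=x}, already established above. With that assumption secured, Theorem \ref{Thm:xgen=x} applies verbatim. Tracing its mechanism in this special case, the theorem produces $x = x^{2^{j+1}} = x^4$ for all $x \in R$, thereby reducing the problem to the already-proven $4$-potent case, whence commutativity follows.

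There is no genuine obstacle here: the corollary is a clean instance of the theorem, and all the substance resides in the theorem itself (with Theorem \ref{Thm:x4=x} serving as its base case). The only point requiring care is the index bookkeeping—confirming the identifications $j = 1$, $2^{j+1} = 4$, and that the stated range $i > 1$ coincides with $i > j$—after which the result is immediate.
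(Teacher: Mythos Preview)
Your proposal is correct and matches the paper's proof exactly: set $j=1$ in Theorem~\ref{Thm:xgen=x} and invoke Theorem~\ref{Thm:x4=x} to supply the required commutativity of $2^{j+1}=4$-potent rings. The paper's version is just a one-line statement of the same specialization.
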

\begin{proof}
	Take $j=1$ in Theorem \ref{Thm:xgen=x} and apply Theorem \ref{Thm:x4=x}.
\end{proof}

%%%%%

\section{Herstein's Theorem for $n=2,4,8$}

In a ring $R$, we have already used ring commutators $[x,y] = xy - yx$ and will do so quite heavily in this section. Among the properties satisfied by commutators, we will need the following:
\[
[x,x] = 0\,,\qquad [x,y] = -[y,x]\,,\qquad [x,y + z] = [x,y] + [x,z]
\]
for all $x,y,z\in R$. To improve readability, it is useful to introduce, for each $x\in R$, the mapping $\ad{x}:R\to R$ defined by $\ad{x}(y)=[x,y]$ for all $y\in R$.
%(Hereafter, we drop one pair of parentheses and write $\ad{x}(y)$ for $(\ad{x})(y)$.)
Properties satisfied by this mapping include:
\begin{align*}
	\ad{x}(y+z) &= \ad{x}(y) + \ad{x}(z)\,, \\
	\ad{x+y} &= \ad{x} + \ad{y}\,, \\	
	\ad{[x,y]} &= \ad{x}\ad{y} - \ad{y}\ad{x}\,.
\end{align*}
for all $x,y,z\in R$.
The first identity says that $\ad{x}$ is an endomorphism of the underlying abelian group $(R,+)$. The second says that $\mathrm{ad}\colon (R,+)\to \End{R,+}$ is a homomorphism of abelian groups. The third is where the symbol ``$\mathrm{ad}$'' comes from; the identity says that $\mathrm{ad}\colon (R,[\cdot,\cdot])\to \End{R,+}$ is the \emph{adjoint} representation of the Lie ring associated to $R$. However, for us, $\mathrm{ad}$ is just a notational shorthand; nothing about a ring's Lie ring plays a role outside of just using the identities above in calculations.

As discussed in {\S}1, we are interested in rings $R$ satisfying $[x^n - x,y] = 0$ for some integer $n > 1$ and all $x,y\in R$. We are specifically interested in $n=2,4$ or $8$, so we will start with the broader assumption that 
\[
[p(x) - x,y] = 0\quad\text{for all } x,y\in R \tag{E}
\]
where $p(t)$ is a polynomial with integer coefficients such that $p(t) = q(t^2)t^2$, where $q(t)\in \mathbb{Z}[t]$.
%In symbols, $p(t) = \sum_{i=1}^k c_i t^{2i}\in \mathbb{Z}[t]$. 

\begin{lemma}\label{Lem:even}
	Let $R$ be a ring satisfying (E). Then $2\,\ad{x}=0$ for all $x\in R$.
\end{lemma}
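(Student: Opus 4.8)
The plan is to exploit the fact that the polynomial $p$ in hypothesis (E) contains only even powers of its argument, so that $p(-x) = p(x)$ as a ring element, and then to compare the instance of (E) at $x$ with the instance at $-x$. The asymmetry created by flipping the sign of $x$ while leaving $p(x)$ untouched is exactly what will force $2\,\ad{x}=0$.

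First I would record the parity observation. Since $p(t) = q(t^2)t^2$, every monomial appearing in $p$ has even degree at least $2$; consequently substituting $-x$ changes nothing, since $(-x)^{2m} = x^{2m}$, and hence $p(-x) = p(x)$ for all $x\in R$.

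Next I would rephrase (E) in the $\ad{}$ notation. The statement that $p(x) - x$ is central is precisely $\ad{p(x)-x} = 0$, and by additivity of $\mathrm{ad}$ this reads $\ad{p(x)} = \ad{x}$. Applying the same to $-x$ in place of $x$, and using the homomorphism property $\ad{x+y} = \ad{x}+\ad{y}$ (so that $\ad{-x} = -\ad{x}$), gives $\ad{p(-x)} = \ad{-x} = -\ad{x}$. The parity observation $p(-x) = p(x)$ then lets me identify the two left-hand sides, yielding $\ad{x} = \ad{p(x)} = \ad{p(-x)} = -\ad{x}$. Rearranging gives $2\,\ad{x} = 0$, which is the claim.

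I do not expect a genuine obstacle here: once the even structure of $p$ is noticed, the argument collapses to a single substitution. The only points demanding a little care are the bookkeeping translating ``$p(x)-x$ is central'' into the operator equation $\ad{p(x)} = \ad{x}$ via the homomorphism property, and confirming that the hypothesis $p(t) = q(t^2)t^2$ really forces $p(-x) = p(x)$ rather than merely constraining the top-degree term. The ``hard part'', such as it is, is simply recognizing that replacing $x$ by $-x$ is the right move.
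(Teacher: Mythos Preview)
Your argument is correct and is exactly the paper's proof, just written out in more detail: the paper compresses the whole thing into the single chain $-\ad{x} = \ad{-x} = \ad{p(-x)} = \ad{p(x)} = \ad{x}$. The only cosmetic difference is that you explicitly justify $p(-x)=p(x)$ and the translation of (E) into $\ad{p(x)}=\ad{x}$, which the paper leaves implicit.
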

\begin{proof}
	For all $x\in R$, $-\ad{x} = \ad{-x} = \ad{p(-x)} = \ad{p(x)} = \ad{x}$.
\end{proof}

Roughly speaking, the conclusion of Lemma \ref{Lem:even} is, in the present setting, what replaces $R$ having characteristic $2$ in the $n$-potent setting for even $n$. The conclusion can be stated in various equivalent ways, such as $[x,y] = [y,x]$ for all $x,y\in R$.

\begin{lemma}\label{Lem:ad_squares}
	Let $R$ be a ring such that $2\,\ad{x}=0$ for all $x\in R$. Then for each nonnegative integer $n$,
	\[
	\ad{x}^{2^n} = \ad{x^{2^n}}\,.
	\]
\end{lemma}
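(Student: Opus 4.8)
The plan is to prove the identity $\ad{x}^{2^n} = \ad{x^{2^n}}$ by induction on $n$. The base case $n=0$ is the trivial identity $\ad{x} = \ad{x}$. For the inductive step, I would assume $\ad{x}^{2^n} = \ad{x^{2^n}}$ and try to deduce $\ad{x}^{2^{n+1}} = \ad{x^{2^{n+1}}}$. The key observation is that $\ad{x}^{2^{n+1}} = \bigl(\ad{x}^{2^n}\bigr)^2 = \ad{x^{2^n}}^2$ by the inductive hypothesis, and $x^{2^{n+1}} = (x^{2^n})^2$, so the whole problem reduces to the single claim that for any element $y$ in such a ring, $\ad{y}^2 = \ad{y^2}$. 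Setting $y = x^{2^n}$ then closes the induction.

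So the real content is the case $n=1$: showing $\ad{y}^2 = \ad{y^2}$ whenever $2\,\ad{y}=0$. To establish this I would compute directly. Using the third $\ad{}$-identity from the preamble, $\ad{[y,z]} = \ad{y}\ad{z} - \ad{z}\ad{y}$, specialized with $z = y$, and then expand $\ad{y^2}$ by writing $y^2 = y\cdot y$ and relating $\ad{y^2}(z) = [y^2, z]$ to $\ad{y}$. A cleaner route is the standard Leibniz-type identity $[y^2,z] = y[y,z] + [y,z]y$, which gives an operator expression for $\ad{y^2}$ in terms of left and right multiplication by $y$ composed with $\ad{y}$. Meanwhile $\ad{y}^2(z) = [y,[y,z]] = y[y,z] - [y,z]y$. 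Comparing the two, $\ad{y^2} - \ad{y}^2$ evaluates to $2[y,z]y$ (or symmetrically $2\,y[y,z]$), which is precisely $2\,(\text{something})\cdot y$ in the range of $\ad{y}$-type terms; under the hypothesis $2\,\ad{y}=0$, this difference vanishes.

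The step I expect to be the main obstacle is the bookkeeping in that $n=1$ computation: one must carefully expand $[y^2,z]$ and $[y,[y,z]]$, keep track of which terms are left versus right multiplications by $y$, and identify the leftover as a multiple of $2$ times a commutator so that the hypothesis $2\,\ad{y}=0$ kills it. The equivalent reformulation of the hypothesis as $[u,v]=[v,u]$ for all $u,v$ (noted after Lemma \ref{Lem:even}) may be useful here, since symmetrizing commutators is exactly what lets the two cross terms collapse. Once the identity $\ad{y}^2 = \ad{y^2}$ is secured, the induction itself is purely formal and requires no further cleverness.
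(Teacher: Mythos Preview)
Your proposal is correct and follows essentially the same route as the paper: induction on $n$, with the entire content in the step $\ad{y^2}=\ad{y}^2$, established by comparing $[y^2,z]=y[y,z]+[y,z]y$ with $[y,[y,z]]=y[y,z]-[y,z]y$ and killing the difference $2[y,z]y$ via $2\,\ad{y}=0$. The paper's proof does the same computation, just arranging it as $[u^2,y]=[u,uy]+[u,yu]$ and then flipping the sign of one summand using $2\,\ad{u}=0$; the two presentations are equivalent.
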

\begin{proof}
	For $n=0$, there is nothing to prove. Assume the goal holds for some $n\geq 0$.  For $x,y\in R$, set $u = x^{2^n}$, so that $\ad{u} = \ad{x}^{2^n}$. Then
	\begin{align*}
		\ad{u^2}(y) &= [u^2,y] = u^2 y - y u^2 \\
		&= u^2 y - uyu + uyu - yu^2 \\
		&= [u,uy] + [u,yu] \\
		&= \ad{u}(uy) + \ad{u}(yu) \\
		&= \ad{u}(uy) - \ad{u}(yu) \\
		&= \ad{u}([u,y]) \\
		&= \ad{u}^2(y)\,.
	\end{align*}
	Thus
	\[
	\ad{x^{2^{n+1}}}(y) = \ad{u^2} = \ad{u}^2 = (\ad{x}^{2^n})^2(y) = \ad{x}^{2^{n+1}}(y)\,.
	\]
	By induction, we have the desired result.
\end{proof}

\begin{lemma}\label{Lem:NL}
	Let $R$ be a ring satisfying (E). Then for all $x,y\in R$,
	\[
	[x,[x,y]] = 0 \implies [x,y] = 0\,,
	\]
	or equivalently,
	\[
	\ad{x}^2(y) = 0 \implies \ad{x}(y) = 0\,.
	\]
\end{lemma}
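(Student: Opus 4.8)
The plan is to reduce the hypothesis to a statement purely about $x^2$ and then exploit the special shape of $p$. First I would invoke Lemma \ref{Lem:ad_squares} in the case $n=1$, which (its hypothesis $2\,\ad{x}=0$ being guaranteed by (E) via Lemma \ref{Lem:even}) gives the operator identity $\ad{x}^2 = \ad{x^2}$. Consequently the hypothesis $\ad{x}^2(y)=0$ says nothing more than $[x^2,y]=0$; that is, $y$ commutes with $x^2$, and therefore with every polynomial in $x^2$. This translation is the crux of the whole argument.

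Next I would bring in (E). Since $p(x)-x$ is central, $\ad{x}=\ad{p(x)}$, so that $[x,y]=[p(x),y]$. The point is now to evaluate the right-hand side using the assumed factorization $p(t)=q(t^2)\,t^2$. Writing $w := q(x^2)$, which is a polynomial in $x^2$, we have $p(x)=w\,x^2$, and the elementary product rule $[ab,y]=a[b,y]+[a,y]b$ yields
\[
[x,y] = [w\,x^2,\,y] = w\,[x^2,y] + [w,y]\,x^2\,.
\]
Both summands vanish: the first because $[x^2,y]=0$ by the opening reduction, and the second because $w$ is a polynomial in $x^2$, so $[w,y]=0$ by the same reduction. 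Hence $[x,y]=0$, i.e. $\ad{x}(y)=0$, which is exactly the conclusion.

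I do not anticipate a serious obstacle; once the identification $\ad{x}^2(y)=[x^2,y]$ is spotted, the rest is a one-line commutator expansion. The only subtlety worth flagging is that the argument genuinely requires the special form $p(t)=q(t^2)\,t^2$ rather than an arbitrary polynomial: it is precisely because $p(x)$ factors with one factor $q(x^2)$ lying in the commutative subring generated by $x^2$ that \emph{both} commutators in the displayed identity are annihilated by the hypothesis $[x^2,y]=0$. For a general $p$ no such factorization is available, and one would be left with leftover commutators that need not vanish.
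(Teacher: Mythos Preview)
Your argument is correct and is essentially the paper's proof in different packaging. Both invoke (E) to get $\ad{x}=\ad{p(x)}$ and then use Lemma~\ref{Lem:ad_squares} together with the factorization $p(t)=q(t^{2})t^{2}$ to annihilate $\ad{p(x)}(y)$; the paper does this at the operator level, writing $\ad{x}(y)=p(\ad{x})(y)=q(\ad{x}^{2})\,\ad{x}^{2}(y)=0$, whereas you unfold the same computation at the ring level via the Leibniz rule $[q(x^{2})x^{2},y]=q(x^{2})[x^{2},y]+[q(x^{2}),y]x^{2}$. One small advantage of your formulation: you only need the case $n=1$ of Lemma~\ref{Lem:ad_squares} (i.e.\ $\ad{x}^{2}=\ad{x^{2}}$), and the conclusion then visibly holds for \emph{every} $p(t)=q(t^{2})t^{2}$, while the paper's intermediate identity $\ad{p(x)}=p(\ad{x})$ is literally delivered by Lemma~\ref{Lem:ad_squares} only when $p$ is a $2$-power monomial (which is all that is needed later).
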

\begin{proof}
	Assume $[x,[x,y]] = \ad{x}^2(y) = 0$ for some $x,y\in R$. By Lemmas \ref{Lem:even} and \ref{Lem:ad_squares}, $\ad{x} = \ad{p(x)} = p(\ad{x})$. Since $p(t) = q(t^2)t^2$ for some $q(t)\in \mathbb{Z}[t]$, it follows that $\ad{x}(y) = q(\ad{x}^2)\ad{x}^2(y) = 0$.
\end{proof}

\begin{lemma}\label{Lem:idem}
	Let $R$ be a ring satisfying (E). Then for all $x,y\in R$,
	\[
	[x,[x,y]] = [x,y]\implies [x,y] = 0\,,
	\]
	or equivalently,
	\[
	\ad{x}^2(y) = \ad{x}(y)\implies \ad{x}(y) = 0\,.
	\]
\end{lemma}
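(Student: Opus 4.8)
The plan is to reduce the ``idempotent'' hypothesis on $\ad{x}$ to the ``nilpotent'' hypothesis already handled by Lemma \ref{Lem:NL}, exploiting the antisymmetry of the commutator. First I would abbreviate $c := \ad{x}(y) = [x,y]$, so that the hypothesis $\ad{x}^2(y) = \ad{x}(y)$ reads simply $\ad{x}(c) = c$; in other words $c$ is a fixed point of $\ad{x}$, or in commutator form $[x,c] = c$. The goal is to deduce $c = 0$.

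The key step is to swap the roles of $c$ and $x$. Since $[c,x] = -[x,c] = -c$, we have $\ad{c}(x) = -c$, and therefore $\ad{c}^2(x) = \ad{c}(-c) = -[c,c] = 0$. Now I would invoke Lemma \ref{Lem:NL} with the pair $(c,x)$ in place of $(x,y)$: it turns $\ad{c}^2(x) = 0$ into $\ad{c}(x) = 0$, i.e. $-c = 0$. Hence $c = \ad{x}(y) = 0$, as desired. (Alternatively, by Lemma \ref{Lem:even} one has $2c = 2[x,y] = 0$, so $-c = c$ and thus $[c,x] = c$; the same application of Lemma \ref{Lem:NL} then gives $c = [c,x] = 0$ directly.)

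The hard part is conceptual rather than computational: one must recognize that there is no hope of proving this by operator theory alone. The structural facts available for $\ad{x}$ — namely $2\,\ad{x} = 0$ and $\ad{x} = \ad{x}^n$ as endomorphisms of $(R,+)$ — do not exclude an ``eigenvalue $1$'' for $\ad{x}$, so the relation $\ad{x}^2(y) = \ad{x}(y)$ cannot be forced down to $\ad{x}(y) = 0$ by manipulating $\ad{x}$ and powers of $c$ (every such manipulation I tried simply reproduced the fixed-point relation). What breaks the impasse is that $c$ is not an arbitrary element but the commutator $[x,y]$: the antisymmetry $[x,c] = -[c,x]$ lets me reinterpret the fixed-point relation for $\ad{x}$ as a two-step nilpotency relation for $\ad{c}$ acting on $x$, exactly the shape Lemma \ref{Lem:NL} consumes. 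Spotting this reversal is the crux; once it is found, the argument is a three-line application of Lemma \ref{Lem:NL}.
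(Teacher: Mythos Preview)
Your proof is correct and is essentially the same argument as the paper's: with $c=[x,y]$, both compute $\ad{c}^2(x)=[c,[c,x]]=[c,-c]=0$, apply Lemma~\ref{Lem:NL} to the pair $(c,x)$ to obtain $[c,x]=0$, and then read off $[x,y]=[x,c]=-[c,x]=0$. The only difference is cosmetic (you name $c$ and spell out the ``swap'' explicitly).
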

\begin{proof}
	Let $x,y\in R$ satisfy $[x,[x,y]] = [x,y]$. Then 
	\[
	[[x,y],[[x,y],x]] = -[[x,y],[x,[x,y]]] = -[[x,y],[x,y]] = 0\,.
	\]
	By Lemma \ref{Lem:NL}, $[[x,y],x] = 0$. Thus $[x,y] = [x,[x,y]] = 0$.
\end{proof}

\begin{theorem}\label{Thm:sq}
	Let $R$ be a ring satisfying $[x^2 - x,y] = 0$ for all $x,y\in R$. Then $R$ is commutative.
\end{theorem}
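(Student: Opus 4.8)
The plan is to recognize that for $p(t) = t^2$ the hypothesis is precisely the instance (E) with $q(t) = 1$, since $t^2 = q(t^2)\,t^2$, so every lemma proved above is available. The whole argument then reduces to showing that $\ad{x}$ is idempotent for each fixed $x$ and invoking Lemma \ref{Lem:idem}. First I would translate the hypothesis into the language of the adjoint map: saying $[x^2 - x, y] = 0$ for all $y$ is exactly $\ad{x^2 - x} = 0$, and by additivity of $\mathrm{ad}$ (i.e.\ $\ad{x+y} = \ad{x} + \ad{y}$) this reads $\ad{x^2} = \ad{x}$ for all $x\in R$.

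Next I would bring in Lemma \ref{Lem:ad_squares}. Its case $n = 1$ gives $\ad{x}^2 = \ad{x^2}$, and combining this with the previous step yields $\ad{x}^2 = \ad{x}$. In other words, for each fixed $x$ the additive endomorphism $\ad{x}$ is idempotent, so $\ad{x}^2(y) = \ad{x}(y)$ for every $y\in R$. At this point Lemma \ref{Lem:idem} applies verbatim: $\ad{x}^2(y) = \ad{x}(y)$ forces $\ad{x}(y) = 0$. Since this holds for all $x,y$, we conclude $[x,y] = 0$ throughout $R$, i.e.\ $R$ is commutative.

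As for the main obstacle, there really is none left at this stage, and I would emphasize that the difficulty has been deliberately front-loaded into the preceding lemmas. The substantive work lives in Lemma \ref{Lem:ad_squares} (which itself rests on $2\,\ad{x} = 0$ from Lemma \ref{Lem:even}) and in Lemma \ref{Lem:idem} (which in turn rests on the ``nil-to-zero'' Lemma \ref{Lem:NL}). Once those are in hand, the case $n = 2$ collapses to the single observation that $\ad{x}$ is idempotent, so the proof is essentially immediate. The only thing to be careful about is verifying that the polynomial $p(t) = t^2$ genuinely satisfies the factorization $p(t) = q(t^2)t^2$ required by (E), which it does trivially with $q = 1$; this is what licenses the use of all four prior lemmas without further checking.
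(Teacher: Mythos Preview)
Your proposal is correct and is exactly the paper's argument: verify (E) with $p(t)=t^2$, use Lemmas \ref{Lem:even} and \ref{Lem:ad_squares} to get $\ad{x}^2=\ad{x^2}=\ad{x}$, and then apply Lemma \ref{Lem:idem} to conclude $\ad{x}=0$. The only cosmetic difference is that the paper's write-up is a single sentence (and parenthetically points to Lemma \ref{Lem:x2+x} as an alternative), while you spell out each step.
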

\begin{proof}
	Since (E) holds, Lemmas \ref{Lem:even} and \ref{Lem:ad_squares} give
	$\ad{x}(y) = \ad{x^2}(y) = \ad{x}^2(y)$ for all $x,y\in R$. By Lemma \ref{Lem:idem}, $\ad{x}(y) = 0$ for all $x,y\in R$, that is, $R$ is commutative. (See also Lemma \ref{Lem:x2+x}.)
\end{proof}

\begin{lemma}\label{Lem:equiv}
	Let $R$ be a ring, let $n$ be a positive integer and assume 
	$[x^{2^n} - x,y] = 0$ for all $x,y\in R$. Then 
	$[x^{2^{n-1}} + \cdots + x^2 + x,y] = 0$ for all $x,y\in R$.
\end{lemma}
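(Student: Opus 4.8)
The plan is to translate everything into the adjoint operators $\ad{x}$ and lean on the two structural lemmas already in hand. First I would observe that the hypothesis $[x^{2^n}-x,y]=0$ is condition (E) with $p(t) = t^{2^n}$; indeed $p(t) = q(t^2)t^2$ for $q(t) = t^{2^{n-1}-1}\in\mathbb{Z}[t]$, so Lemmas \ref{Lem:even} and \ref{Lem:ad_squares} both apply. From Lemma \ref{Lem:even} I get $2\,\ad{x}=0$, and from Lemma \ref{Lem:ad_squares} I get $\ad{x^{2^k}} = \ad{x}^{2^k}$ for every $k\geq 0$. The hypothesis itself, read through $\mathrm{ad}$, says $\ad{x}^{2^n} = \ad{x^{2^n}} = \ad{x}$.

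Next I would set $A = \ad{x}$ and $w = x^{2^{n-1}} + \cdots + x^2 + x = \sum_{k=0}^{n-1} x^{2^k}$. Since $\mathrm{ad}$ is additive, $\ad{w} = \sum_{k=0}^{n-1} \ad{x^{2^k}} = \sum_{k=0}^{n-1} A^{2^k} = A + A^2 + A^4 + \cdots + A^{2^{n-1}}$. The key computation is to square this. Because $2A=0$ forces every cross term $2A^{2^i+2^j}$ to vanish, squaring acts as a Frobenius on the commuting powers of $A$, giving $\ad{w}^2 = \sum_{k=0}^{n-1} A^{2^{k+1}} = A^2 + A^4 + \cdots + A^{2^n}$. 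Using $A^{2^n} = A$, the top term folds back to the bottom and we recover $\ad{w}^2 = A + A^2 + \cdots + A^{2^{n-1}} = \ad{w}$.

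Thus $\ad{w}$ is idempotent as an operator, i.e.\ $\ad{w}^2(y) = \ad{w}(y)$ for all $y\in R$. The finishing move is to feed $w$ into Lemma \ref{Lem:idem} in the role of its universally quantified element: since $R$ satisfies (E) and $\ad{w}^2(y)=\ad{w}(y)$ holds for every $y$, the lemma yields $\ad{w}(y)=0$ for all $y$, which is precisely $[w,y]=0$, the desired conclusion. (For $n=1$ the sum collapses to $w=x$ and one recovers Theorem \ref{Thm:sq}.)

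The only real obstacle is anticipating that $\ad{w}$ ought to be idempotent; once that is expected, the computation is essentially forced. I expect the two points needing a little care to be the vanishing of all cross terms in the square (which relies on the operator identity $2A=0$ from Lemma \ref{Lem:even}, whence $2A^m=0$ for $m\geq 1$) and the wrap-around $A^{2^n}=A$ that turns the shifted sum back into $\ad{w}$; both are immediate from the cited lemmas.
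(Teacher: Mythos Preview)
Your proposal is correct and follows essentially the same route as the paper: translate to $\ad{x}$, use Lemmas \ref{Lem:even} and \ref{Lem:ad_squares} to write $\ad{w}$ as a sum of commuting powers of $A=\ad{x}$, square using $2A=0$ to kill the cross terms, fold $A^{2^n}$ back to $A$, and finish with Lemma \ref{Lem:idem}. If anything, you are slightly more explicit than the paper about why the cross terms vanish and about identifying $q(t)$.
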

\begin{proof}
	For all $x\in R$, $\ad{x}^{2^n} = \ad{x^{2^n}} = \ad{x}$, using Lemmas \ref{Lem:even} and \ref{Lem:ad_squares}. Now for all $x\in R$ (and using Lemma \ref{Lem:even}),
	\begin{align*}
		\ad{x^{2^{n-1}} + \cdots + x^2 + x}^2 &= (\ad{x^{2^{n-1}}} + \cdots + \ad{x^2} + \ad{x})^2 \\
		&= (\ad{x}^{2^{n-1}} + \cdots + \ad{x}^2 + \ad{x})^2 \\
		&= \ad{x}^{2^n} + \ad{x}^{2^{n-1}} + \cdots + \ad{x}^4 + \ad{x}^2 \\
		&= \ad{x} + \ad{x}^{2^{n-1}} + \cdots + \ad{x}^4 + \ad{x}^2 \\
		&= \ad{x^{2^{n-1}}} + \cdots + \ad{x^2} + \ad{x} \\
		&= \ad{x^{2^{n-1}} + \cdots + x^2 + x}
	\end{align*}
	By Lemma \ref{Lem:idem}, $\ad{x^{2^{n-1}} + \cdots + x^2 + x} = 0$. This proves the desired result.
\end{proof}

\begin{theorem}\label{Thm:fourth}
	Let $R$ be a ring satisfying $[x^4 - x,y] = 0$ for all $x,y\in R$. Then $R$ is commutative.
\end{theorem}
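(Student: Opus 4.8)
The plan is to reduce the $n=4$ case directly to the already-established squaring case, Theorem \ref{Thm:sq}. First I would observe that the hypothesis $[x^4-x,y]=0$ is an instance of (E) with $p(t)=t^4 = q(t^2)t^2$ and $q(t)=t$, so every lemma of this section is available. Since $4 = 2^2$, Lemma \ref{Lem:equiv} applied with $n=2$ immediately yields $[x^2+x,y]=0$ for all $x,y\in R$: the sum $x^{2^{n-1}}+\cdots+x^2+x$ collapses to $x^2+x$ in this case.

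Next I would exploit the characteristic-$2$-like behaviour supplied by Lemma \ref{Lem:even}. Because $2\,\ad{x}=0$ for all $x$, we have $2[x,y]=0$, and hence $x^2+x$ and $x^2-x$ are interchangeable inside a commutator: $[x^2-x,y]=[x^2+x,y]-2[x,y]=[x^2+x,y]=0$. Thus $R$ satisfies the hypothesis of Theorem \ref{Thm:sq}, and commutativity follows at once. Alternatively one can bypass Theorem \ref{Thm:sq} and finish with the idempotent lemmas directly: from $[x^2+x,y]=0$ we get $\ad{x^2}=-\ad{x}=\ad{x}$ by Lemma \ref{Lem:even} and $\ad{x^2}=\ad{x}^2$ by Lemma \ref{Lem:ad_squares}, so $\ad{x}^2(y)=\ad{x}(y)$; Lemma \ref{Lem:idem} then forces $\ad{x}(y)=0$, i.e. $R$ is commutative.

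As for the main obstacle, there is essentially none left at this stage, which is really the point of the preceding development. All of the genuine work has been front-loaded into the ``nonlinearity'' Lemma \ref{Lem:NL}, its idempotent consequence Lemma \ref{Lem:idem}, and the descent Lemma \ref{Lem:equiv}; together with Lemma \ref{Lem:ad_squares}, which makes powers of $\ad{x}$ behave like $\ad{}$ of powers, these let the fourth-power condition descend to the square. The only thing one must watch is the sign bookkeeping — tracking that $x^2+x$ and $x^2-x$ become interchangeable precisely because $2\,\ad{x}=0$ — but this is entirely routine.
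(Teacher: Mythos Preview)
Your proposal is correct and matches the paper's proof almost exactly: the paper simply says ``take $n=2$ in Lemma~\ref{Lem:equiv} and use Theorem~\ref{Thm:sq}.'' Your only addition is making explicit the sign bookkeeping (that $[x^2+x,y]=0$ and $[x^2-x,y]=0$ coincide because $2\,\ad{x}=0$), which the paper leaves implicit, and your alternative finish via Lemmas~\ref{Lem:ad_squares} and~\ref{Lem:idem} is precisely the content of the proof of Theorem~\ref{Thm:sq} itself.
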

\begin{proof}
	This follows from taking $n=2$ in Lemma \ref{Lem:equiv} and using Theorem \ref{Thm:sq}.
\end{proof}

We conclude with the result that began this whole endeavor. We are well aware that of all the proofs in this paper, this is the one that most seems like it was generated by an automated deduction tool. We have simplified the proof to the point where it is possible to follow each individual step, but we would certainly agree that it is difficult to see how a human would have found the proof.

\begin{theorem}\label{Thm:exp8}
	Let $R$ be a ring satisfying $[x^8 - x,y] = 0$ for all $x,y\in R$. Then
	$R$ is commutative.
\end{theorem}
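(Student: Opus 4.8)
The polynomial $p(t) = t^8$ satisfies $p(t) = q(t^2)t^2$ with $q(t) = t^3 \in \mathbb{Z}[t]$, so the hypothesis is an instance of (E). I would first invoke the machinery already in place: Lemma \ref{Lem:even} gives $2\,\ad{x} = 0$, and Lemmas \ref{Lem:even}--\ref{Lem:ad_squares} give $\ad{x}^8 = \ad{x^8} = \ad{x}$ for all $x$. Applying Lemma \ref{Lem:equiv} with $n = 3$ then produces the key per-element relation $\ad{x}^4 + \ad{x}^2 + \ad{x} = 0$ for all $x \in R$. As in the proofs of Theorems \ref{Thm:sq} and \ref{Thm:fourth}, the goal is to massage this into a relation of the form $\ad{z}^2(w) = \ad{z}(w)$ (or $\ad{z}^2(w) = 0$), so that Lemma \ref{Lem:idem} (respectively Lemma \ref{Lem:NL}) forces the relevant commutator to vanish; commutativity is exactly $\ad{x} = 0$ for all $x$.

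The obstruction relative to $n = 4$ is visible immediately. Working at the operator level, where $2\,\ad{x} = 0$ makes the subring generated by $\ad{x}$ behave like an $\mathbb{F}_2$-algebra, the relation factors as $\ad{x}\bigl(\ad{x}^3 + \ad{x} + \mathrm{id}\bigr) = 0$, and $t^3 + t + 1$ is irreducible over $\mathbb{F}_2$. Hence $\ad{x}$ may have a component on which it acts invertibly with $\ad{x}^7 = \mathrm{id}$ (an ``$\mathbb{F}_8$-part''), and single-variable manipulation cannot collapse it: substituting $x^2$ or $x^4$ into the relation merely reproduces it. So I would turn to interactions between \emph{different} elements, using $\ad{x+y} = \ad{x} + \ad{y}$ and $\ad{[x,y]} = \ad{x}\ad{y} - \ad{y}\ad{x}$ together with Lemma \ref{Lem:ad_squares} to generate new operator identities from sums and commutators, aiming to land some $\ad{z}^2(w) = \ad{z}(w)$.

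A concrete intermediate target I would pursue is the idempotent the relation conceals: one computes that $e := \ad{x}^3 + \ad{x}$ satisfies $e^2 = e$ and $\ad{x}\,e = \ad{x}$, so $\ad{x} = 0$ if and only if $e = 0$. The entire content of the theorem is thus the vanishing of this idempotent operator. The trouble is that $e$ is not manifestly an inner derivation $\ad{z}$, so Lemma \ref{Lem:idem}, which is tailored to idempotents of the form $\ad{z}$, does not apply to $e$ directly. Bridging this gap---re-expressing the idempotent data through genuine commutators $[z,w]$ so that the two ``downgrading'' lemmas can fire---is what the proof must accomplish.

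I expect precisely this bridging to be the main obstacle. The required chain of substitutions, each feeding a commutator back into $\ad{x}^4 + \ad{x}^2 + \ad{x} = 0$ until an idempotent relation in the range of Lemma \ref{Lem:idem} appears, is long and has no evident guiding pattern; this is the step for which \textsf{Prover9} assistance is essentially unavoidable, and I would not expect to find it by unaided hand search.
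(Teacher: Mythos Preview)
Your setup is exactly the paper's: apply Lemma~\ref{Lem:equiv} with $n=3$ to obtain $\ad{x}^4+\ad{x}^2+\ad{x}=0$, and then aim to manufacture an instance of $\ad{z}^2(w)=0$ or $\ad{z}^2(w)=\ad{z}(w)$ so that Lemmas~\ref{Lem:NL}/\ref{Lem:idem} fire. Your diagnosis that single-variable manipulation is hopeless because $t^3+t+1$ is irreducible over $\mathbb{F}_2$ is correct, and your observation that $e=\ad{x}^3+\ad{x}$ is an idempotent with $\ad{x}e=\ad{x}$ is a nice repackaging of the problem (the paper does not isolate $e$ explicitly).

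But what you have written is not a proof: you correctly locate the gap and then explicitly decline to close it. The entire content of the theorem is the ``bridging'' step you leave to \textsf{Prover9}, so as a proof attempt this is incomplete by your own account.

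For comparison, here is how the paper actually bridges. It never works with your idempotent $e$; instead it finds a concrete $\ad{z}^2(w)=0$. For arbitrary $u,v$ one first checks, using $\ad{[u,v]}=\ad{u}\ad{v}+\ad{v}\ad{u}$ and $[u,v]=[u+v,v]$, that
\[
(\ad{u}^2+\ad{v}^2)([u,v])=\ad{u+v}^2([u,v])=\ad{u+v}^3(v).
\]
Applying $\ad{u+v}$ and the relation $\ad{u+v}^4=\ad{u+v}^2+\ad{u+v}$ turns this into
\[
\ad{u+v}\,\ad{v}^3(u)=\ad{u+v}\bigl(v+\ad{u}(v)+\ad{u}^3(v)\bigr),
\]
and substituting $v\mapsto[u,v]$ gives $[\,u+[u,v],\;\ad{[u,v]}^3(u)\,]=0$. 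Now take $u=[x^2,y^2+y]$ and $v=y^2+y$: using $\ad{y}^4+\ad{y}^2=\ad{y}$ one finds $[u,v]=[x^2,y]$ and $u+[u,v]=[x^2,y^2]$, and the displayed identity collapses to $\ad{[x^2,y^2]}^2([x^2,y])=0$. Four successive applications of Lemma~\ref{Lem:NL} (each time rewriting the surviving commutator as an $\ad{\cdot}^2$ of something simpler) cascade this down to $[x,y]=0$. So the missing ingredient is not a new lemma but precisely the two inspired substitutions $v\mapsto[u,v]$ and $(u,v)=([x^2,y^2+y],\,y^2+y)$, which is indeed the part \textsf{Prover9} supplied.
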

\begin{proof}
	We will freely use Lemma \ref{Lem:even} without explicit reference.
	By Lemma \ref{Lem:equiv}, $[x^4 + x^2 + x,y] = 0$ for all $x,y\in R$, that is, $\ad{x}^4 + \ad{x}^2 + \ad{x} = 0$. 
	
	First, for all $u,v\in R$,
	\begin{align*}
		(\ad{u}^2 + \ad{v}^2)([u,v]) &= (\ad{u}^2 + \ad{[u,v]} + \ad{v}^2)([u,v]) \\
		&= (\ad{u}^2 + \ad{u}\ad{v} + \ad{v}\ad{u} + \ad{v}^2)([u,v]) \\
		&= (\ad{u} + \ad{v})^2 ([u,v]) \\
		&= \ad{u+v}^2([u,v]) \\
		&= \ad{u+v}^2([u+v,v]) \\
		&= \ad{u+v}^3(v)\,.
	\end{align*}
	Thus
	\begin{align*}
		\ad{u+v}(\ad{u}^2 + \ad{v}^2)([u,v]) &= \ad{u+v}^4(v) \\
		&= \ad{u+v}^2(v) + \ad{u+v}(v) \\
		&= \ad{u+v}([u+v,v] + v) \\
		&= \ad{u+v}(v + [u,v])\,.
	\end{align*}
	Rearranging this, we have $\ad{u+v}\ad{v}^2([u,v]) = \ad{u+v}(v + [u,v] + \ad{u}^2([u,v]))$, that is,
	\[
	\ad{u+v}\ad{v}^3(u) = \ad{u+v}(v + \ad{u}(v) + \ad{u}^3(v))\,.
	\]
	In this last equation, replace $v$ with $[u,v]$ to get
	\begin{equation}\label{Eqn:exp8-1}
		[u + [u,v], \ad{[u,v]}^3(u)] = [u + [u,v],\ad{u}(v) + \ad{u}^2(v) + \ad{u}^4{v}] = 0\,.
	\end{equation}
	
	Now let $u = [x^2,y^2 + y]$ and $v = y^2 + y$. Then 
	\begin{align*}
		[u,v] &= \ad{y^2 + y}^2(x^2) \\
		&= (\ad{y}^2+\ad{y})^2(x^2) \\
		&= (\ad{y}^4 + \ad{y}^2)(x^2) \\
		&= \ad{y}(x^2) \\
		&= [x^2,y]\,,
	\end{align*}
	and $u + [u,v] = [x^2,y^2]$. Plugging all this into \eqref{Eqn:exp8-1}, we have
	\begin{equation}\label{Eqn:exp8-2}
		[[x^2,y^2], \ad{[x^2,y]}^3([x^2,y^2 + y])] = 0\,.
	\end{equation}
	Now
	\begin{align*}
		\ad{[x^2,y]}^3([x^2,y^2 + y]) &= \ad{[x^2,y]}^2([[x^2,y],[x^2,y^2] + [x^2,y]]) \\
		&= \ad{[x^2,y]}^3([x^2,y^2]) \\
		&= \ad{[x^2,y]}^3\ad{y}^2(x^2) \\
		&= \ad{[x^2,y]}^3\ad{y}([x^2,y]) \\
		&= \ad{[x^2,y]}^3[[x^2,y],y]) \\
		&= \ad{[x^2,y]}^4(y) \\
		&= \ad{[x^2,y]}^2(y) + \ad{[x^2,y]}(y) \\
		&= \ad{[x^2,y]}\ad{y}^2(x^2) + \ad{y}^2(x^2) \\
		&= [[x^2,y],[x^2,y^2]] + [x^2,y^2]\,.
	\end{align*}
	Thus the left side of \eqref{Eqn:exp8-2} simplifies to
	\[
	[[x^2,y^2],[[x^2,y],[x^2,y^2]] + [x^2,y^2]] = [[x^2,y^2],[[x^2,y],[x^2,y^2]] = \ad{[x^2,y^2]}^2([x^2,y])\,.
	\]
	
	Therefore \eqref{Eqn:exp8-2} reduces to
	\[
	\ad{[x^2,y^2]}^2([x^2,y]) = 0
	\]
	for all $x,y\in R$. By Lemma \ref{Lem:NL},
	\[
	0 = \ad{[x^2,y^2]}([x^2,y]) = \ad{[x^2,y]}\ad{y}^2(x^2) = \ad{[x^2,y]}^2(y)\,.
	\]
	By Lemma \ref{Lem:NL} again, $\ad{[x^2,y]}(y) = 0$, 
	that is, $\ad{y}^2(x^2) = 0$. By Lemma \ref{Lem:NL} again, $\ad{y}(x^2) = [x^2,y] = \ad{x}^2(y) = 0$.
	Applying Lemma \ref{Lem:NL} one last time, $[x,y] = 0$. Therefore $R$ is commutative.
\end{proof}

%author: do not remove the remaining lines
\bigskip
\noindent
{\small
\authorbio
}

\end{document}